\documentclass[12pt, reqno]{amsart}
\usepackage{amsmath, amsthm, amscd, amsfonts, amssymb, mathtools, color, hyperref}
\usepackage[dvipsnames]{xcolor}

\textheight 22.5truecm \textwidth 14.5truecm
\setlength{\oddsidemargin}{0.35in}\setlength{\evensidemargin}{0.35in}

\setlength{\topmargin}{-.5cm}

\newtheorem{theorem}{Theorem}[section]
\newtheorem{lemma}[theorem]{Lemma}

\newtheorem{corollary}[theorem]{Corollary}

\theoremstyle{definition}

\theoremstyle{remark}
\newtheorem{remark}[theorem]{Remark}
\numberwithin{equation}{section}

\begin{document}
\setcounter{page}{1}

\title[Eigenvalue value location]
{Eigenvalue location of certain matrix polynomials}

\author[Pallavi, Shrinath and Sachindranath]{Pallavi. B, Shrinath Hadimani 
and Sachindranath Jayaraman}
\address{School of Mathematics\\ 
Indian Institute of Science Education and Research Thiruvananthapuram\\ 
Maruthamala P.O., Vithura, Thiruvananthapuram -- 695 551, Kerala, India.}
\email{(pallavipoorna20, srinathsh3320, sachindranathj)@iisertvm.ac.in, 
sachindranathj@gmail.com}

\subjclass[2010]{15A18, 15A22, 15B51, 47A56}
	
\keywords{Doubly stochastic matrices; Schur stable matrices; 
block companion matrix of matrix polynomials; eigenvalue location of matrix 
polynomials.}

\begin{abstract} 
It is known that a matrix polynomial with unitary matrix coefficients has its eigenvalues 
in the annular region $\frac{1}{2} < |\lambda| < 2$. We prove in this short note that 
under certain assumptions, matrix polynomials with either doubly stochastic matrix 
coefficients or Schur stable matrix coefficients also have eigenvalues in similar annular regions. 
\end{abstract}
	
\maketitle
	
\section{Introduction}\label{sec-1}

We work over the field $\mathbb{C}$ of complex numbers. $M_n(\mathbb{C})$ denotes the space 
of $n \times n$ matrices over $\mathbb{C}$. The notations $||\cdot||_2$ and 
$||\cdot||_\infty$ will denote respectively the spectral norm and the maximum row sum norm. 
A matrix polynomial of size $n$ and degree $m$ is a function $P$ from $\mathbb{C}$ to 
$M_n(\mathbb{C})$, given by $P(\lambda) = \displaystyle \sum_{i=0}^{m} A_i\lambda^i$, 
where $A_i \in M_n(\mathbb{C})$ and $A_m \neq 0$. A scalar $\lambda_0 \in \mathbb{C}$ is 
called an eigenvalue of $P(\lambda)$ if $P(\lambda_0)v = 0$ for some nonzero vector 
$v \in \mathbb{C}^n$. The nonzero vector $v \in \mathbb{C}^n$ satisfying the 
equation $P(\lambda_0)v=0$ is called an eigenvector of $P(\lambda)$ corresponding to an 
eigenvalue $\lambda_0$. Equivalently, for a given matrix polynomial $P(\lambda), 
\lambda_0 \in \mathbb{C}$ is an eigenvalue, if $\text{det}P(\lambda_0) = 0$. Notice that 
for a matrix polynomial of size $n$ and degree $m$, there are at most $mn$ number of
eigenvalues. Moreover, $\lambda_0 = 0$ is an eigenvalue of $P(\lambda)$ if and only 
if $A_0$ is singular. We say $\infty$ is an eigenvalue of $P(\lambda)$, if the reverse matrix 
polynomial $\widehat{P} (\lambda):= \lambda^m P(\frac{1}{\lambda}) = A_0 \lambda^m + 
A_1 \lambda^{m-1} + \cdots + A_{m-1} \lambda + A_m$ has zero as an eigenvalue. Therefore if 
the leading coefficient is nonsingular, $P(\lambda)$ has exactly $mn$ number of finite eigenvalues, 
counting multiplicities. If the leading coefficient of $P(\lambda)$ is a nonsingular matrix then 
define a monic matrix polynomial corresponding to $P(\lambda)$ as, $P_U(\lambda):= I \lambda^m + 
U_{m-1} \lambda^{m-1} + \cdots + U_1 \lambda + U_0$, where $U_i= A_m^{-1}A_i$ for $i= 0,1,\dots, m-1$ 
and a block companion matrix corresponding to $P(\lambda)$ as 
$C:= \begin{bmatrix}
0 & I & 0 & \cdots &  0\\
0 & 0 & I & \cdots & 0 \\
\vdots & \vdots &  \vdots & \ddots & \vdots\\
0 & 0 & 0 & \cdots & I\\
-U_0 & -U_1 & -U_2 & \cdots & -U_{m-1}
\end{bmatrix}$. The eigenvalues of $P(\lambda)$, $P_U(\lambda)$ and $C$ are the same 
(see \cite{Higham-Tisseur} for details). 

\medskip
Eigenvalue location of matrix polynomials is 
a challenging problem, with interesting applications in applied mathematics. For instance, 
information about the location of eigenvalues is helpful in determining pseudospectra of 
matrix polynomials \cite{Tisseur-Higham}. The papers \cite{Betcke-Higham-Tisseur} 
and \cite{Tisseur-Meerbergen} are a good source of examples. Several interesting methods to 
bound the eigenvalues of matrix polynomials exist in the literature (see for instance 
\cite{Higham-Tisseur, Bini-Noferini-Sharify} and \cite{ Le-Du-Nguyen}). This work aims to 
provide eigenvalue bounds for matrix polynomials whose coefficients are either doubly stochastic 
matrices or Schur stable matrices. We end this section with a lemma due to Cauchy that will be used 
later.

\begin{lemma}[\cite{Cauchy}] \label{Lem-Cauchy bound for roots of polynomials}
Let $p(\lambda) = a_n\lambda^n + \dots + a_1\lambda + a_0$ be a polynomial of degree $n$. 
If $\lambda_0 \in \mathbb{C}$ is a root of $p(\lambda)$, then $|\lambda_0| \leq 1 + 
\max \Big\{\frac{|a_{0}|}{|a_n|},\frac{|a_1|}{|a_n|},\dots,\frac{|a_{n-1}|}
{|a_n|}\Big\}$.
\end{lemma}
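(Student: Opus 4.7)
The plan is to dispose of the trivial case first and then attack the substantive one. If $|\lambda_0| \le 1$, the conclusion is immediate because the right-hand side of the asserted inequality is at least $1$. So from here on I would assume $|\lambda_0| > 1$ (which in particular forces $\lambda_0 \neq 0$).

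In this main case, I would start from $p(\lambda_0) = 0$, isolate the leading term to get $a_n \lambda_0^n = -\sum_{i=0}^{n-1} a_i \lambda_0^i$, and then divide both sides by $a_n \lambda_0^{n-1}$ (which is nonzero since $a_n \neq 0$ by hypothesis and $\lambda_0 \neq 0$). This yields $\lambda_0 = -\sum_{i=0}^{n-1} (a_i/a_n)\,\lambda_0^{i-n+1}$, expressing $\lambda_0$ as a linear combination of nonpositive powers of $\lambda_0$ with coefficients $a_i/a_n$.

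Setting $M := \max_{0 \le i \le n-1} |a_i|/|a_n|$, the triangle inequality gives
\[
|\lambda_0| \;\le\; M \sum_{i=0}^{n-1} |\lambda_0|^{i-n+1} \;=\; M \sum_{j=0}^{n-1} |\lambda_0|^{-j}.
\]
Since $|\lambda_0| > 1$, this finite geometric sum is strictly bounded above by the convergent infinite geometric series, giving $|\lambda_0| \le M \cdot |\lambda_0|/(|\lambda_0| - 1)$. Rearranging this inequality (legitimate because $|\lambda_0| > 0$) produces $|\lambda_0| - 1 \le M$, which is the desired bound.

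I do not anticipate a genuine obstacle: the proof is essentially a one-line manipulation once the case split is made. The only points requiring minor care are ensuring $\lambda_0 \neq 0$ before dividing (handled by the case assumption) and invoking the geometric series bound only when $|\lambda_0| > 1$ (again handled by the case assumption). The scaffolding via the two cases is what makes the single bound valid uniformly.
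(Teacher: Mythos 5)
Your proof is correct. The paper itself offers no proof of this lemma --- it is quoted directly from Cauchy with only a citation --- so there is nothing to compare against; your argument is the standard one (split on $|\lambda_0|\le 1$ versus $|\lambda_0|>1$, isolate the leading term, and bound the resulting geometric sum by $\frac{|\lambda_0|}{|\lambda_0|-1}$), and every step, including the divisions by $a_n\lambda_0^{n-1}$ and by $|\lambda_0|-1>0$, is justified by your case hypothesis.
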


\section{Main Results}\label{sec-2}

The main results are presented in this section. For ease of reading, this is subdivided 
into subsections, which are self-explanatory.
 
\medskip

\subsection{Location of eigenvalues of matrix polynomials with doubly stochastic matrix
coefficients}\label{sec-2.2}\hspace*{\fill} 

\medskip

We consider matrix polynomials whose coefficients are doubly stochastic matrices in this 
section. A nonnegative square matrix is doubly stochastic if all the row and column 
sums are $1$. A recent result due to Cameron (Theorem $3.2$, \cite{Cameron}) says that if 
$\lambda_0$ is an eigenvalue of a matrix polynomial with unitary matrix coefficients, then 
$\frac{1}{2} < |\lambda_0| < 2$. Doubly stochastic matrices being convex combination  
of permutation matrices, a natural question is to ask if the eigenvalues of matrix 
polynomials with doubly stochastic matrix coefficients lie in some annular region. It turns 
out that the eigenvalues lie in the same annular region as in the unitary case. Since $||A||_2=1$ 
for any doubly stochastic matrix $A$, the proof is very much the same as 
in the unitary case. We therefore state the theorem without proof. We introduce some notations 
before stating the theorem. Let $\mathcal{D} = \big\{P(\lambda) = A_m\lambda^m + A_{m-1}\lambda^{m-1} 
+ \cdots + A_0: A_i$ are $n\times n$ doubly stochastic matrices and 
$A_m, A_0$ are $n \times n$ permutation matrices, $n, m \in \mathbb{N}$ $\big\}$ and let 
$\sigma(\mathcal{D})=\big\{|\lambda_0|:\lambda_0$ is an eigenvalue of 
$P(\lambda)\in \mathcal{D}\big\}$.

\begin{theorem}\label{Thm-1-eigenvalue-doubly stochastic-coefficients}
Let $P(\lambda)=A_m\lambda^m+A_{m-1}\lambda^{m-1}+\cdots+A_0 \in \mathcal{D}$. If 
$\lambda_0 \in \mathbb{C}$ is an eigenvalue of $P(\lambda)$ then 
$\frac{1}{2}<|\lambda_0|<2$.
\end{theorem}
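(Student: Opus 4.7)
My plan is to mirror Cameron's argument for unitary matrix polynomials, exploiting two key facts: (i) every doubly stochastic matrix $A$ satisfies $\|A\|_2 = 1$ (since $\|A\|_1 = \|A\|_\infty = 1$ gives $\|A\|_2 \leq 1$, while $A\mathbf{1} = \mathbf{1}$ gives $\|A\|_2 \geq 1$), and (ii) the extremal coefficients $A_m$ and $A_0$ are permutation matrices, hence unitary, so they preserve the Euclidean norm of every vector. The hypothesis in the definition of $\mathcal{D}$ that $A_m, A_0$ be permutations (rather than merely doubly stochastic) is precisely what is needed to replace the ``unitary'' hypothesis of Cameron's theorem at the two places where norm preservation is invoked.

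For the upper bound, I would choose a unit eigenvector $v$ for $\lambda_0$ and rewrite $P(\lambda_0)v = 0$ as $A_m v\,\lambda_0^m = -\sum_{i=0}^{m-1} A_i v\,\lambda_0^i$. Taking $\|\cdot\|_2$ and using (i)--(ii) gives $|\lambda_0|^m = \|A_m v\|_2\, |\lambda_0|^m \leq \sum_{i=0}^{m-1} |\lambda_0|^i \|A_i\|_2 \|v\|_2 \leq \sum_{i=0}^{m-1} |\lambda_0|^i$. One can finish either by summing the geometric series (if $|\lambda_0| \geq 2$ then the right side is at most $|\lambda_0|^m - 1 < |\lambda_0|^m$, a contradiction, yielding the strict bound $|\lambda_0| < 2$), or by applying Lemma \ref{Lem-Cauchy bound for roots of polynomials} to the scalar polynomial $t^m - t^{m-1} - \cdots - t - 1$, whose coefficient ratios are all $1$.

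For the lower bound, I would first observe that $\lambda_0 \neq 0$ because $A_0$ is a permutation and thus nonsingular, so $P(0) = A_0$ is invertible. Then I would apply the upper-bound argument to the reverse polynomial $\widehat{P}(\mu) = A_0 \mu^m + A_1 \mu^{m-1} + \cdots + A_m$. Crucially $\widehat{P} \in \mathcal{D}$, because its leading and constant coefficients are the permutations $A_0$ and $A_m$, and the remaining coefficients are the same doubly stochastic $A_i$'s. Since $\mu_0 = 1/\lambda_0$ is then an eigenvalue of $\widehat{P}$, the upper bound yields $|1/\lambda_0| < 2$, i.e., $|\lambda_0| > 1/2$.

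There is really no conceptual obstacle here; the proof is essentially a bookkeeping transfer from the unitary to the doubly stochastic setting. The only place requiring a moment's thought is to see why the hypothesis $A_m, A_0 \in$ permutations cannot be weakened to ``doubly stochastic'': a general doubly stochastic $A_m$ may have singular values strictly less than $1$ (while still having spectral norm $1$), so the equality $\|A_m v\|_2 = \|v\|_2$ in the first step would fail, and with it both the upper and (via reversal) lower bounds. Flagging this point is the only subtlety in an otherwise direct proof.
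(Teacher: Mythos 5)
Your proof is correct and is exactly the argument the paper has in mind: the paper states this theorem without proof, noting that since $\|A\|_2=1$ for doubly stochastic $A$ the argument is the same as Cameron's for unitary coefficients, and your write-up (norm estimate on $P(\lambda_0)v=0$ using $\|A_i\|_2=1$ and norm preservation by the permutations $A_m, A_0$, plus reversal for the lower bound) is precisely that transfer. Your observation about why $A_m, A_0$ must be permutations rather than merely doubly stochastic is also the right one and matches Remark \ref{Rem-doubl stochastic}.
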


\medskip

The bounds given in Theorem \ref{Thm-1-eigenvalue-doubly stochastic-coefficients} are 
optimal, as we prove below.

\begin{theorem}\label{Thm-2-eigenvalue-doubly stochastic-coefficients}
$\inf (\sigma(\mathcal{D}))=\frac{1}{2}$ and $\sup (\sigma(\mathcal{D}))=2$.
\end{theorem}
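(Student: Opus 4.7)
The plan is to combine Theorem~\ref{Thm-1-eigenvalue-doubly stochastic-coefficients}, which already gives $\sup(\sigma(\mathcal{D}))\le 2$ and $\inf(\sigma(\mathcal{D}))\ge \frac{1}{2}$, with two explicit families in $\mathcal{D}$ whose eigenvalues approach these two bounds. The central observation I would exploit is that in dimension $n=2$ the permutation matrices $I$ and $J := \begin{pmatrix} 0 & 1 \\ 1 & 0 \end{pmatrix}$ share the eigenvector $v = (1,-1)^{T}$, with $Iv=v$ and $Jv=-v$. Consequently, any matrix polynomial whose coefficients are drawn from $\{I,J\}$ acts on $v$ as a scalar polynomial in $\lambda$, reducing the $2\times 2$ matrix eigenvalue problem to a classical question about roots of a specific scalar polynomial.

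For the upper bound, the natural candidate family is
\[
P_m(\lambda) \;=\; I\lambda^m + J\sum_{i=0}^{m-1}\lambda^i, \qquad m\ge 2.
\]
The leading coefficient is $I$ and the constant term is $J$, so $P_m\in\mathcal{D}$. Evaluating on $v$ gives $P_m(\lambda)v = q_m(\lambda)\,v$ with $q_m(\lambda) = \lambda^m - \lambda^{m-1} - \cdots - \lambda - 1$, so every real root of $q_m$ is an eigenvalue of $P_m$. The identity $(\lambda-1)q_m(\lambda) = \lambda^{m+1} - 2\lambda^m + 1$ shows that $q_m(1)=1-m<0$ and $q_m(2)=1>0$, hence $q_m$ has a real root $\lambda_m \in (1,2)$; moreover, for $\lambda>1$ the equation $q_m(\lambda)=0$ is equivalent to the fixed-point relation $\lambda = 2 - \lambda^{-m}$. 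A short comparison (for instance, checking $q_m(3/2)<0$ once $m$ is large) keeps $\lambda_m$ bounded below by a constant strictly greater than $1$, and the fixed-point relation then forces $\lambda_m \to 2$. Hence $\sup(\sigma(\mathcal{D}))=2$.

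For the lower bound I would use the reverse polynomial
\[
\widehat{P_m}(\lambda) \;=\; J\sum_{i=1}^{m}\lambda^i + I,
\]
whose leading coefficient $J$ and constant term $I$ are permutation matrices, so $\widehat{P_m} \in \mathcal{D}$. Since the nonzero eigenvalues of $P_m$ and $\widehat{P_m}$ are reciprocals of each other, $1/\lambda_m$ is an eigenvalue of $\widehat{P_m}$, and $1/\lambda_m \to \frac{1}{2}$ yields $\inf(\sigma(\mathcal{D}))=\frac{1}{2}$.

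The only step that requires any real work is the asymptotic analysis of the scalar root $\lambda_m$; the reduction via the common eigenvector, the verification that $P_m$ and $\widehat{P_m}$ belong to $\mathcal{D}$, and the eigenvalue relation between a matrix polynomial and its reverse are all routine, so I do not anticipate a substantive obstacle.
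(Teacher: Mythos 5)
Your proposal is correct, and it reaches both bounds through essentially the same witness family as the paper --- matrix polynomials with coefficients drawn from $\{I, J\}$, where the paper's $Q(\lambda)=I\lambda^m+I'\lambda^{m-1}+\cdots+I'$ is exactly your $P_m$ --- but the analysis is carried out differently at three points. First, you extract the scalar polynomial $q_m(\lambda)=\lambda^m-\lambda^{m-1}-\cdots-\lambda-1$ by evaluating on the common eigenvector $v=(1,-1)^{T}$, whereas the paper computes $\det Q(\lambda)$ and factors it as $(\lambda^m-\cdots-1)(\lambda^m+\cdots+1)$; the two reductions are equivalent (your $v$ accounts for one determinant factor), but yours avoids the determinant computation. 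Second, for the infimum the paper builds a separate family $I\lambda^d+\cdots+I\lambda+I'$ and runs an independent intermediate value theorem argument on $(\tfrac12,r)$, while you obtain the lower witnesses for free from the reversal $\widehat{P_m}$ and the reciprocal relation between eigenvalues of $P$ and $\widehat{P}$ (your $\widehat{P_m}$ is the paper's lower-bound example up to left multiplication by $J$, which does not change the eigenvalues). Third, the paper proves $\sup(\sigma(\mathcal{D}))=2$ by contradiction, deducing $2-R\le R^{-m}$ for every $m$ from the assumption that some $R<2$ is an upper bound, whereas you show directly that the largest real root satisfies $\lambda_m=2-\lambda_m^{-m}$ with $\lambda_m>\tfrac32$ for $m\ge 2$, hence $\lambda_m\to 2$. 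Your route is somewhat more economical --- one family serves both bounds, and the asymptotics are constructive rather than by contradiction --- while the paper's version keeps the two bounds self-contained and does not rely on the reverse-polynomial eigenvalue correspondence. All the steps you flag as routine (membership in $\mathcal{D}$, the eigenvector reduction, the reciprocal relation for nonzero eigenvalues) do check out, so there is no gap.
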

\begin{proof}
From Theorem \ref{Thm-1-eigenvalue-doubly stochastic-coefficients}, 
$\frac{1}{2}$ is a lower bound for $\sigma(\mathcal{D})$. We show that any number greater 
than $\frac{1}{2}$ cannot be a lower bound for $\sigma(\mathcal{D})$. If 
$\frac{1}{2}<r<1$, then $\displaystyle \sum_{i=1}^{\infty}r^i >1$. Therefore there 
exists $d \in \mathbb{N}$, such that $\displaystyle \sum_{i=1}^{d}r^i >1$. Consider, 
$q(\lambda):=\lambda^d+\lambda^{d-1}+\cdots+\lambda-1$. Then $q(r) = \displaystyle 
\sum_{i=1}^{d}r^i-1>0$, and $q(\frac{1}{2}) = \displaystyle 
\sum_{i=1}^{d} \Big(\frac{1}{2}\Big)^i-1<0$. The intermediate value theorem 
guarantees that $q(\lambda)$ has a root in the interval $(\frac{1}{2},r)$. Now consider 
the matrix polynomial, $P(\lambda):= 
I \lambda^d +I\lambda^{d-1}+\cdots+I\lambda +I'\in \mathcal{D}$, where $I=\begin{bmatrix}
1 & 0\\
0 & 1
\end{bmatrix}$ and $I^{\prime}=\begin{bmatrix}
0 & 1\\
1 & 0
\end{bmatrix}$. Then we have $P(\lambda)=\begin{bmatrix}
\lambda^d+\lambda^{d-1}+\cdots+\lambda & 1 \\
1 & \lambda^d+\lambda^{d-1}+\cdots+\lambda
\end{bmatrix}$. Therefore $\text{det}P(\lambda)=(\lambda^d+\lambda^{d-1}+\cdots+
\lambda)^2-1=(\lambda^d+\lambda^{d-1}+\cdots+\lambda+1)(\lambda^d+\lambda^{d-1}+\cdots+
\lambda-1)$. Thus $q(\lambda)$ is a factor of $\text{det}P(\lambda)$. Since $q(\lambda)$ 
has a root in the interval $(\frac{1}{2},r)$, $P(\lambda)$ has an eigenvalue whose 
modulus is less than $r$. Thus, $r$ is not a lower bound for $\sigma(\mathcal{D})$, 
thereby proving that $\inf (\sigma(\mathcal{D}))=\frac{1}{2}$.  
Again by Theorem \ref{Thm-1-eigenvalue-doubly stochastic-coefficients}, $2$ is an 
upper bound for $\sigma(\mathcal{D})$. We now show that any number less than $2$ cannot 
be an upper bound for $\sigma(\mathcal{D})$. Let $1 < R < 2$. Define $w_m(\lambda)=\lambda^m-
\lambda^{m-1}-\cdots-\lambda-1$, for $m\in \mathbb{N}$. Suppose $R$ is an upper bound for 
$\sigma(\mathcal{D})$, then $R^m\geq R^{m-1}+\cdots+R+1$. Otherwise, $R^m<R^{m-1}+\cdots+
R+1$. This implies $w_m(R)=R^m-R^{m-1}-\cdots-R-1 <0$. We also have $w_m(2)>0$. Once again, 
by the intermediate value theorem $w_m(\lambda)$ has a root in the 
interval $(R,2)$. Now consider, $Q(\lambda):=I\lambda^m + I^{\prime} \lambda^{m-1} + 
\cdots + I^{\prime} \lambda + I^{\prime} \in \mathcal{D}$. Then we have, $Q(\lambda) = 
\begin{bmatrix}
\lambda^m &\lambda^{m-1}+\cdots+\lambda+1 \\
\lambda^{m-1}+\cdots+\lambda+1 & \lambda^m
\end{bmatrix}$. Therefore $\text{det}Q(\lambda)=(\lambda^m)^2-(\lambda^{m-1}+\cdots+
\lambda+1)^2=(\lambda^m-\lambda^{m-1}-\cdots-\lambda-1)(\lambda^m+\lambda^{m-1}+\cdots
+\lambda+1)$. Thus $w_m(\lambda)$ is a factor of $\text{det}Q(\lambda)$. Since 
$w_m(\lambda)$ has a root in the interval $(R,2)$, $Q(\lambda)$ has an eigenvalue 
whose modulus is greater than $R$. This is a contradiction to the assumption that 
$R$ is an upper bound for $\sigma(\mathcal{D})$. Hence $R^m \geq R^{m-1}+\cdots+R+1 = 
\dfrac{1-R^m}{1-R}$. Since $ 1-R<0$ we have, $(1-R)R^m \leq 1-R^m$, which implies 
$2-R\leq \dfrac{1}{R^m}$. Since this is true for all $m\in \mathbb{N}$, as 
$m \rightarrow \infty$ we get, $2-R\leq 0$; that is, $2\leq R$, a contradiction to 
the choice of $R$. Thus $R$ is not an upper bound for $\sigma(\mathcal{D})$. Hence 
$\sup (\sigma(\mathcal{D}))=2$.
\end{proof}

\medskip
\begin{remark}\label{Rem-doubl stochastic}
If the leading coefficient or the constant term is a doubly stochastic matrix, 
but not a permutation matrix, then the eigenvalues may not necessarily lie in the 
region $\frac{1}{2} < |\lambda| < 2$. We refer to Remark $2.9$ of \cite{PB-SH-SJ-1} 
for details. We also wish to remark that both Theorems 
\ref{Thm-1-eigenvalue-doubly stochastic-coefficients} and 
\ref{Thm-2-eigenvalue-doubly stochastic-coefficients} were stated in \cite{PB-SH-SJ-1} 
without proof.
\end{remark}

\medskip
\noindent
We now prove that any matrix polynomial of degree at least $2$ from the collection 
$\mathcal{D}$ has two distinct eigenvalues. Note that the eigenvalues of a doubly stochastic matrix 
are of absolute value less than or equal to $1$.
    
\begin{corollary}\label{Cor-3-doubly-stochastic}
Let $P(\lambda) = A_m\lambda^m + A_{m-1}\lambda^{m-1} + \cdots + A_1\lambda + A_0$ be 
an $n\times n$ matrix polynomial where the $A_i$'s are $n\times n$ doubly stochastic 
matrices and $A_m, A_0$ are permutation matrices with $m \geq 2$. Then $P(\lambda)$ 
has at least two distinct eigenvalues.
\end{corollary}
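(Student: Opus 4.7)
The plan is to argue by contradiction, assuming that all $mn$ eigenvalues of $P(\lambda)$ coincide with a single value $\lambda_0$, and then derive a bound on $|\lambda_0|$ that violates Theorem~\ref{Thm-1-eigenvalue-doubly stochastic-coefficients}. Since $A_m$ is a permutation matrix, $A_m$ is invertible, so $P(\lambda)$ has exactly $mn$ finite eigenvalues, counted with multiplicity, and these coincide with the eigenvalues of the monic polynomial $P_U(\lambda)$ and of the block companion matrix $C$.

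The main ingredient I would use is the trace identity coming from Vieta's formulas for $C$. The characteristic polynomial of $C$ is $\det(\lambda I - C) = \lambda^{mn} - \operatorname{tr}(C)\lambda^{mn-1} + \cdots$, and from the block structure of $C$ we have $\operatorname{tr}(C) = -\operatorname{tr}(U_{m-1}) = -\operatorname{tr}(A_m^{-1}A_{m-1})$. Hence the sum of all eigenvalues of $P(\lambda)$ equals $-\operatorname{tr}(A_m^{-1}A_{m-1})$. The key observation is that $A_m^{-1} = A_m^{T}$ is again a permutation matrix (hence doubly stochastic), and the product of two doubly stochastic matrices is doubly stochastic; consequently $A_m^{-1}A_{m-1}$ is a doubly stochastic matrix, so its trace is a nonnegative real number lying in $[0,n]$.

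Now, if every eigenvalue of $P(\lambda)$ equals a common value $\lambda_0$, then summing gives $mn\,\lambda_0 = -\operatorname{tr}(A_m^{-1}A_{m-1}) \in [-n,0]$. In particular $\lambda_0$ is real and $|\lambda_0| \leq \frac{1}{m}$. Since $m \geq 2$, this yields $|\lambda_0| \leq \frac{1}{2}$, which contradicts the strict lower bound $|\lambda_0| > \frac{1}{2}$ guaranteed by Theorem~\ref{Thm-1-eigenvalue-doubly stochastic-coefficients}. Therefore $P(\lambda)$ cannot have all eigenvalues equal, and must admit at least two distinct eigenvalues.

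The only step requiring a little care is the verification that $A_m^{-1}A_{m-1}$ is doubly stochastic, which is immediate once one notes that $A_m^{-1}$ coincides with the transpose (hence a permutation matrix) of $A_m$; the rest is a direct computation with the companion matrix trace. I do not anticipate any real obstacle, since both the bound from Theorem~\ref{Thm-1-eigenvalue-doubly stochastic-coefficients} and the elementary trace bound for doubly stochastic matrices combine cleanly, with the inequality $\frac{1}{m} \leq \frac{1}{2}$ being precisely what makes the hypothesis $m \geq 2$ essential.
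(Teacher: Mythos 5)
Your proposal is correct and follows essentially the same route as the paper: assume all $mn$ eigenvalues equal $\lambda_0$, use $\operatorname{trace}(C) = -\operatorname{trace}(A_m^{-1}A_{m-1})$ for the block companion matrix, bound that trace by $n$, and contradict the strict lower bound $|\lambda_0| > \tfrac{1}{2}$ of Theorem~\ref{Thm-1-eigenvalue-doubly stochastic-coefficients}. The only cosmetic difference is that you bound $|\operatorname{trace}(A_m^{-1}A_{m-1})|$ via the nonnegative diagonal entries of the doubly stochastic product, whereas the paper bounds it by summing the moduli of its eigenvalues; both give the same estimate $|\lambda_0| \le \tfrac{1}{m} \le \tfrac{1}{2}$.
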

    
\begin{proof}
We prove this by contradiction.	 Suppose all the eigenvalues of $P(\lambda)$ are same, 
say $\lambda_0$. The monic matrix polynomial corresponding to $P(\lambda)$ is given by 
$P_U(\lambda) = I \lambda^m + B_{m-1} \lambda^{m-1} + \cdots+B_1 \lambda + B_0$ where, 
$B_i = A_m^{-1}A_i$ for $1 \leq i \leq m-1$. Since $A_m$ is a permutation matrix, the 
matrices $B_i$'s are doubly stochastic matrices and $B_0$ is a permutation matrix. 
Consider the corresponding block companion matrix of $P(\lambda)$,
$C=\begin{bmatrix}
0 & I & 0 & \cdots & 0\\
0 & 0 & I & \cdots & 0\\
\vdots & \vdots & \vdots & \ddots & \vdots\\
0 & 0 & 0 & \cdots & I\\
-B_0 & -B_1 & -B_2  & \cdots & -B_{m-1}  
\end{bmatrix}$. 
Then we have, $\text{trace}(C) = - \text{trace}(B_{m-1})$. Let $\mu_1, \ldots,\mu_n$ 
be the eigenvalues of $B_{m-1}$. We then have  
$\displaystyle \sum_{i=1}^{mn} \lambda_0 = -\sum_{i=1}^{n} \mu_i$. 
This implies $mn \lambda_0 = \displaystyle -\sum_{i=1}^{n}\mu_i$. On taking modulus 
we have $mn |\lambda_0| = \displaystyle \Big|\sum_{i=1}^{n}\mu_i\Big| \leq 
\displaystyle \sum_{i=1}^{n}|\mu_i| \leq \displaystyle \sum_{i=1}^{n}1 = n$.
Thus $|\lambda_0| \leq \frac{1}{m} \leq \frac{1}{2}$, a contradiction to Theorem 
\ref{Thm-1-eigenvalue-doubly stochastic-coefficients}. Therefore $P(\lambda)$ has at 
least two distinct eigenvalues.
\end{proof}
 
From the above proof one may observe that Corollary \ref{Cor-3-doubly-stochastic} is 
also true when all the coefficients are unitary matrices. We prove below that 
an $m^{th}$ degree matrix polynomial with doubly stochastic matrix coefficients has at least $m$ 
distinct eigenvalues on the unit circle.

\begin{theorem}\label{Thm-4-doubly-stochastic}
Let $P(\lambda) = A_m \lambda^m +\dots + A_1 \lambda + A_0$, where $A_i$'s are 
$n \times n$ doubly stochastic matrices. Then $P(\lambda)$ has at least $m$ distinct
eigenvalues on the unit circle.
\end{theorem}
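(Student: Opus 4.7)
The plan is to exploit a universal eigenvector that every doubly stochastic matrix shares. Let $e = (1,1,\ldots,1)^T \in \mathbb{C}^n$. Since each $A_i$ is doubly stochastic, every row of $A_i$ sums to $1$, which gives $A_i e = e$ for every $i = 0, 1, \ldots, m$. Consequently, evaluating the matrix polynomial at any $\lambda \in \mathbb{C}$ on $e$ yields
\[
P(\lambda)\, e \;=\; \sum_{i=0}^{m} A_i e\, \lambda^i \;=\; \Bigl(\sum_{i=0}^{m} \lambda^i\Bigr) e.
\]

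Next I would identify the scalar factor. The polynomial $1 + \lambda + \lambda^2 + \cdots + \lambda^m = \dfrac{\lambda^{m+1}-1}{\lambda-1}$ vanishes precisely at the $(m+1)$-th roots of unity other than $1$, namely $\omega_k = e^{2\pi i k/(m+1)}$ for $k = 1, 2, \ldots, m$. These are $m$ distinct complex numbers, all of modulus one.

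For each such $\omega_k$, the above identity gives $P(\omega_k)\, e = 0$ with $e \neq 0$, so by the definition of an eigenvalue of a matrix polynomial stated in the introduction, each $\omega_k$ is an eigenvalue of $P(\lambda)$. This produces $m$ distinct eigenvalues of $P(\lambda)$ lying on the unit circle, finishing the proof.

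There is no real obstacle here; the entire argument rides on the single observation that doubly stochastic matrices share the Perron eigenvector $e$ with common eigenvalue $1$, which collapses the matrix polynomial, when applied to $e$, to a scalar geometric series whose roots are explicit. In particular, the hypothesis that $A_m$ or $A_0$ be a permutation matrix is not needed for this statement.
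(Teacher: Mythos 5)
Your proof is correct. It is the transpose of the paper's argument: the paper writes out each $A_k$ explicitly, adds rows $1$ through $n-1$ to the last row (using the fact that all \emph{column} sums equal $1$) to make every entry of the last row equal to $\sum_{k=0}^m \lambda^k$, and concludes that $\lambda^m+\cdots+\lambda+1$ divides $\det P(\lambda)$. You instead use the fact that all \emph{row} sums equal $1$, i.e.\ that $e=(1,\ldots,1)^T$ is a common eigenvector with $A_ie=e$, so that $P(\lambda)e=\bigl(\sum_{i=0}^m\lambda^i\bigr)e$ and the nontrivial $(m+1)$-th roots of unity are eigenvalues with explicit eigenvector $e$. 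Your route is shorter, avoids the determinant and row-reduction bookkeeping entirely, and actually proves slightly more: it only needs the coefficients to be row-stochastic (column-stochasticity is never used), and it exhibits the common eigenvector, whereas the paper's divisibility argument only locates the eigenvalues. What the paper's version buys in exchange is the explicit factorization $\det P(\lambda)=(\lambda^m+\cdots+\lambda+1)q(\lambda)$, which records the information at the level of the determinant rather than of a single eigenvector. Your closing remark that the permutation hypothesis on $A_m,A_0$ is not needed is consistent with the theorem as stated, which does not impose it.
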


\begin{proof}
Since the coefficients are doubly stochastic matrices, we can write,
\begin{center}
$A_k =
\begin{bmatrix}
a^{(k)}_{11} & a^{(k)}_{12} & \ldots & a^{(k)}_{1(n-1)} &
1-\displaystyle \sum_{j=1}^{n-1}a^{(k)}_{1j}\\
a^{(k)}_{21} & a^{(k)}_{22} & \ldots & a^{(k)}_{2(n-1)} &
1-\displaystyle \sum_{j=1}^{n-1}a^{(k)}_{2j}\\
\vdots & \vdots & \ddots & \vdots & \vdots \\
a^{(k)}_{(n-1)1} & a^{(k)}_{(n-1)2} & \ldots & a^{(k)}_{(n-1)(n-1)} &
1 - \displaystyle \sum_{j=1}^{n-1}a^{(k)}_{(n-1)j}\\
1 - \displaystyle \sum_{i=1}^{n-1}a^{(k)}_{i1} & 1 - \displaystyle \sum_{i=1}^{n-1}
a^{(k)}_{i2} & \ldots & 1 - \displaystyle \sum_{i=1}^{n-1}a^{(k)}_{i(n-1)} &
\displaystyle \sum_{i,j=1}^{n-1} a^{(k)}_{ij} - (n-2)
\end{bmatrix}$
\end{center}
for $k = 0, 1, \dots , m$ and $0 \leq a^{(k)}_{ij} \leq 1$ for $i,j = 1,2,\dots,n-1$.
Therefore, $P(\lambda) = $
\begin{center}
$\begin{bmatrix}
\displaystyle \sum_{k=0}^{m}a_{11}^{(k)}\lambda^k & \cdots & \displaystyle 
\sum_{k=0}^{m}a_{1(n-1)}^{(k)}\lambda^k & \displaystyle \sum_{k=0}^{m}\Big(1 
-\sum_{j=1}^{n-1}a^{(k)}_{1j}\Big) \lambda^k \\
\vdots & \ddots & \vdots & \vdots \\
\displaystyle \sum_{k=0}^{m}a_{(n-1)1}^{(k)}\lambda^k & \cdots & \displaystyle 
\sum_{k=0}^{m}a_{(n-1)(n-1)}^{(k)}\lambda^k & \displaystyle \sum_{k=0}^{m}\Big(1 - 
\sum_{j=1}^{n-1}a^{(k)}_{(n-1)j}\Big) \lambda^k \\
\displaystyle \sum_{k=0}^{m}\Big(1 - \sum_{i=1}^{n-1}a^{(k)}_{i1}\Big) \lambda^k & \cdots 
& \displaystyle \sum_{k=0}^{m}\Big(1 -  \sum_{i=1}^{n-1}a^{(k)}_{i(n-1)}\Big) \lambda^k & 
\displaystyle \sum_{k=0}^{m}\Big(\sum_{i,j=1}^{n-1}a^{(k)}_{ij}-n+2\Big) \lambda^k
\end{bmatrix}$.
\end{center}
Now by performing the following elementary row operations we get a sequence of matrix
polynomials 
\begin{center}
$P(\lambda) \hspace{0.2cm} \tiny{\xrightarrow {R_n \rightarrow R_n + R_1}}
\hspace{0.2cm} P_1(\lambda) \tiny{\xrightarrow {R_n \rightarrow R_n + R_2}}
\hspace{0.2cm} P_2(\lambda) \hspace{0.2cm} \tiny{\xrightarrow {R_n \rightarrow R_n + R_3}} 
\cdots \hspace{0.2cm} \tiny{\xrightarrow{R_n \rightarrow R_n + R_{n-1}}} \hspace{0.2cm}
\widetilde{P}(\lambda)$.
\end{center}
At the $(n-1)^{th}$ step we get the matrix polynomial 
\begin{center}
$\widetilde{P}(\lambda) = \begin{bmatrix*}[c]
\displaystyle \sum_{k=0}^{m}a_{11}^{(k)}\lambda^k & \cdots & \displaystyle 
\sum_{k=0}^{m}a_{1(n-1)}^{(k)}\lambda^k & \displaystyle \sum_{k=0}^{m}\Big(1 
-\sum_{j=1}^{n-1}a^{(k)}_{1j}\Big) \lambda^k \\
\vdots & \ddots & \vdots & \vdots \\
\displaystyle \sum_{k=0}^{m}a_{(n-1)1}^{(k)}\lambda^k & \cdots & \displaystyle 
\sum_{k=0}^{m}a_{(n-1)(n-1)}^{(k)}\lambda^k & \displaystyle \sum_{k=0}^{m}\Big(1 - 
\sum_{j=1}^{n-1}a^{(k)}_{(n-1)j}\Big) \lambda^k \\
\displaystyle \sum_{k=0}^{m}\lambda^k & \cdots & \displaystyle \sum_{k=0}^{m}\lambda^k & 
\displaystyle \sum_{k=0}^{m}\lambda^k
\end{bmatrix*}$.
\end{center}
Note that the determinant of $P(\lambda)$ is same as the determinant of 
$\widetilde{P}(\lambda)$ 
(see section S1.1, \cite{GLR-Book-2}). That is, $\text{det}P(\lambda)= 
\text{det}\widetilde{P}(\lambda) = (\lambda^m + \dots + \lambda +1)q(\lambda)$. The 
conclusion follows as the roots of $\lambda^m + \dots + \lambda +1 $ are eigenvalues of $P(\lambda)$.        
\end{proof}

Let us remark that Corollary \ref{Cor-3-doubly-stochastic} follows from Theorem 
\ref{Thm-4-doubly-stochastic}. A proof of Corollary \ref{Cor-3-doubly-stochastic} has 
been added to illustrate Theorem \ref{Thm-1-eigenvalue-doubly stochastic-coefficients}.
  
\subsection{Location of eigenvalues of matrix polynomials with Schur stable matrix 
coefficients.}\label{sec-2.3}\hspace*{\fill} 

\medskip
We now move on to matrix polynomials whose coefficients are Schur stable matrices. Recall 
that a square matrix $A$ is said to be Schur stable if all its eigenvalues are of modulus
less than $1$. In this section we determine the eigenvalue location of matrix 
polynomials whose coefficients are Schur stable matrices; more generally we consider the 
matrix coefficients whose eigenvalues are of modulus less than $r$, for some $r > 0$. Let 
$M_r$ be the collection of square matrices whose eigenvalues are of modulus less than 
$r$, for some $r > 0$. Let $\mathcal{S}_r = \big\{P(\lambda)= I \lambda^m + 
A_{m-1}\lambda^{m-1} + \cdots + A_0: A_i \in M_r$ are commuting $n\times n$ matrices and 
$m,n \in \mathbb{N}\big\}$, and let $\sigma(\mathcal{S}_r)=\big\{|\lambda_0|: \lambda_0$ 
is an eigenvalue of $P(\lambda) \in \mathcal{S}_r \big\}$. Our first result is the following.

\begin{theorem}\label{Thm-1-eigenvalue-Schur stable-coefficients}
Let $P(\lambda)= I\lambda^m + A_{m-1}\lambda^{m-1} + \cdots + A_0 \in \mathcal{S}_r$. 
If $\lambda_0 \in \mathbb{C}$ is an eigenvalue of $P(\lambda)$, then $|\lambda_0| 
< r+1$.
\end{theorem}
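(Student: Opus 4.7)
The plan is to reduce the matrix polynomial statement to the scalar Cauchy bound (Lemma \ref{Lem-Cauchy bound for roots of polynomials}) via simultaneous triangularization, since the coefficients commute.

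First I would invoke the standard fact that a commuting family $\{A_0, A_1, \dots, A_{m-1}\}$ of matrices in $M_n(\mathbb{C})$ is simultaneously triangularizable: there exists a unitary $U \in M_n(\mathbb{C})$ such that $T_k := U^* A_k U$ is upper triangular for every $k = 0, 1, \dots, m-1$. Conjugating $P(\lambda)$ by $U$ yields
\[
U^* P(\lambda) U \;=\; I \lambda^m + T_{m-1} \lambda^{m-1} + \cdots + T_1 \lambda + T_0,
\]
which is itself upper triangular as a matrix polynomial, with $j$-th diagonal entry
\[
p_j(\lambda) \;=\; \lambda^m + t^{(m-1)}_{jj}\lambda^{m-1} + \cdots + t^{(1)}_{jj}\lambda + t^{(0)}_{jj},
\]
where $t^{(k)}_{jj}$ denotes the $(j,j)$ entry of $T_k$. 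Since $\det U \ne 0$, we have $\det P(\lambda) = \prod_{j=1}^{n} p_j(\lambda)$, so every eigenvalue $\lambda_0$ of $P(\lambda)$ is a root of some $p_j(\lambda)$.

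Next I would use the crucial observation that the diagonal entries of $T_k$ are precisely the eigenvalues of $A_k$ (counted with multiplicity), and hence $|t^{(k)}_{jj}| < r$ for all $j$ and $k$, by the assumption $A_k \in M_r$. Applying the Cauchy bound of Lemma \ref{Lem-Cauchy bound for roots of polynomials} to the monic scalar polynomial $p_j$, any root $\lambda_0$ satisfies
\[
|\lambda_0| \;\le\; 1 + \max_{0 \le k \le m-1} |t^{(k)}_{jj}| \;<\; 1 + r,
\]
which is the desired conclusion.

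The only real step to double check is the strict inequality: Cauchy's bound gives a weak inequality, but since there are only finitely many coefficients and each satisfies $|t^{(k)}_{jj}| < r$ strictly, the maximum is still strictly less than $r$, and strict inequality propagates. The main (mild) obstacle is simply ensuring that simultaneous triangularization is valid for the commuting family, which is classical; everything else is a direct application of Lemma \ref{Lem-Cauchy bound for roots of polynomials}.
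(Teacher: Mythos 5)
Your proposal is correct and follows essentially the same route as the paper: simultaneous unitary triangularization of the commuting coefficients, factorization of $\det P(\lambda)$ into the monic scalar polynomials formed by the diagonal entries (which are the eigenvalues of the $A_k$, hence of modulus less than $r$), and then the Cauchy bound of Lemma \ref{Lem-Cauchy bound for roots of polynomials}. The handling of the strict inequality is also the same as in the paper's argument.
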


\begin{proof}
Consider the corresponding block companion matrix of $P(\lambda)$ given by
$C=\begin{bmatrix}
0 & I & 0 & \cdots & 0\\
0 & 0 & I & \cdots & 0 \\
\vdots & \vdots & \vdots & \ddots & \vdots \\
0 & 0 & 0 & \cdots & I\\
-A_0 & -A_1 & -A_2 & \cdots & -A_{m-1}
\end{bmatrix}$. Since $A_i$'s commute with each other, there exists a unitary matrix 
$U$ such that $ U^*A_iU=T_i$, for all $i = 0, 1, \cdots, m-1$, where, the matrices $T_i$'s 
are upper triangular. Let us write $T_i = \begin{bmatrix}
a_{11}^{(i)} & a_{12}^{(i)} & \cdots & a_{1n}^{(i)}\\
0 & a_{22}^{(i)} & \cdots &  a_{1n}^{(i)}\\
\vdots & \ddots & & \vdots\\
0 & 0 & \cdots &  a_{nn}^{(i)}
\end{bmatrix} $ for $ i = 0,1,\cdots, m-1$. Then $ a_{11}^{(i)}, a_{22}^{(i)}, 
\cdots, a_{nn}^{(i)}$ are the eigenvalues of $A_i$. Hence $\big|a_{kk}^{(i)}\big|
< r$, for $k = 1,2, \cdots, n$ and $i = 0,1,\dots,m-1$. Let 
$Q(\lambda):=U^*P(\lambda)U = I\lambda^m+T_{m-1} \lambda^{m-1} +\cdots + T_1\lambda+T_0$. 
Then we have $\text{det}(P(\lambda)) =\text{det}(Q(\lambda)) = 
\displaystyle \prod_{k=1}^{n}\big(\lambda^m+ a_{kk}^{(m-1)}\lambda^{m-1}+ \cdots + 
a_{kk}^{(0)}\big)$. Therefore, the eigenvalues of $P(\lambda)$ are the roots of the 
polynomials, $\lambda^m + a_{kk}^{(m-1)}\lambda^{m-1} + \cdots + a_{kk}^{(0)}$, for 
$1\leq k\leq n $. Hence, if $\lambda_0$ is an eigenvalue of $P(\lambda)$, then 
$\lambda_0$ is a root of the polynomial $\lambda^m + \lambda^{m-1}a_{kk}^{(m-1)} + 
\cdots + a_{kk}^{(0)}$ for some $k$, $1\leq k \leq n $. Then by Lemma \ref{Lem-Cauchy 
bound for roots of polynomials} we have, 
$|\lambda_0| \leq 1+\max \{|a_{kk}^{(m-1)}|,\cdots,|a_{kk}^{(0)}|\} < r+1$. 
Therefore, $|\lambda_0| < r+1$. This proves the theorem.
\end{proof}

The following corollary is immediate.

\begin{corollary}\label{Cor-1-eigenvalue-Schur stable-coefficients}
Let $P(\lambda)= I\lambda^m+A_{m-1}\lambda^{m-1}+\cdots+A_0$, where $A_i$'s  are 
commuting Schur stable matrices. If $\lambda_0 \in \mathbb{C}$ is an eigenvalue of 
$P(\lambda)$, then $|\lambda_0|<2$.
\end{corollary}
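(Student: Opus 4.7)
The plan is to obtain this as an immediate specialization of Theorem \ref{Thm-1-eigenvalue-Schur stable-coefficients}. Recall that a matrix $A$ is Schur stable precisely when all of its eigenvalues lie strictly inside the open unit disk, i.e.\ when $A \in M_r$ with $r = 1$ in the notation of Section \ref{sec-2.3}. So the hypothesis of the corollary is exactly the hypothesis of Theorem \ref{Thm-1-eigenvalue-Schur stable-coefficients} with the parameter $r$ set equal to $1$, and $P(\lambda) \in \mathcal{S}_1$.

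Accordingly, I would first verify that $P(\lambda) \in \mathcal{S}_1$: the leading coefficient is the identity (so $P$ is monic), the $A_i$'s are commuting, and each $A_i$ has spectral radius strictly less than $1$, which is the definition of belonging to $M_1$. Then I would invoke Theorem \ref{Thm-1-eigenvalue-Schur stable-coefficients} directly: if $\lambda_0$ is an eigenvalue of $P(\lambda)$, then $|\lambda_0| < r + 1 = 1 + 1 = 2$.

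There is essentially no obstacle here; the corollary is simply the statement of the theorem in the important special case $r = 1$. The only thing worth flagging in the write-up is that Schur stability requires strict inequality $|\mu| < 1$ for every eigenvalue $\mu$ of each $A_i$, which is exactly what is needed to conclude the strict bound $|\lambda_0| < 2$ rather than a weak bound.
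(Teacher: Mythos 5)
Your proposal is correct and matches the paper exactly: the paper states the corollary is immediate from Theorem \ref{Thm-1-eigenvalue-Schur stable-coefficients}, which is precisely your specialization to $r=1$. Your remark about strict inequality being what yields the strict bound $|\lambda_0|<2$ is a sensible clarification but does not change the argument.
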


A typical example where such a problem arises in applications is as follows. This is 
taken from \cite{Betcke-Higham-Tisseur}. Consider the following quadratic eigenvalue 
problem arising from a linearly damped mass-spring system:
$$P(\lambda)v=(M\lambda^2+C\lambda+K)v =0, $$
where $M = I, C = 10T, K=5T$, with $T$ being the tridiagonal matrix given by 
$T =\begin{bmatrix}
3 & -1 & &\\
-1 & \ddots & \ddots & \\
 & \ddots & \ddots & -1\\
 & & -1 & 3
\end{bmatrix}$. Note that $||T||_\infty = 5$, thus $||C||_\infty=50, 
||K||_\infty=25$. Therefore the eigenvalues of $M, C$ and $K$ lie inside the disc of 
radius $50 + \epsilon$ for any $\epsilon > 0$. Hence by Theorem 
\ref{Thm-1-eigenvalue-Schur stable-coefficients}, if $\lambda_0$ is an eigenvalue of 
$P(\lambda)$, then $|\lambda_0| < 50 + \epsilon + 1$. Since $\epsilon > 0$ is arbitrary, 
we infer that $|\lambda_0| \leq 51$.
These have also been verified numerically for sufficiently large size matrices. We now 
prove that the bounds obtained in Theorem \ref{Thm-1-eigenvalue-Schur stable-coefficients} 
are optimal.

\begin{theorem}\label{Thm-2-eigenvalue-Schur stable-coefficients}
$\sup (\sigma(\mathcal{S}_r))=r+1$ and $\inf (\sigma(\mathcal{S}_r))= 0 $.
\end{theorem}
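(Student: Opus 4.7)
\emph{Proof plan.} The infimum statement is essentially immediate. I would take $P(\lambda) := I\lambda$, that is, the degree $m=1$ polynomial with $A_0$ the zero matrix; the zero matrix lies in $M_r$ since its sole eigenvalue $0$ has modulus $0 < r$, and commutativity is vacuous. Since $\det P(0) = 0$, the scalar $\lambda_0 = 0$ is an eigenvalue of $P \in \mathcal{S}_r$, so $0 \in \sigma(\mathcal{S}_r)$, which forces $\inf(\sigma(\mathcal{S}_r)) = 0$.

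For the supremum, Theorem \ref{Thm-1-eigenvalue-Schur stable-coefficients} already supplies the upper bound $\sup(\sigma(\mathcal{S}_r)) \le r+1$. The plan for the matching lower bound is to show that no $R < r+1$ can be an upper bound, via a purely scalar construction (a $1\times 1$ matrix polynomial, whose lone coefficient trivially commutes with itself). Fix $R < r+1$, so that $R - 1 < r$ and the interval $(\max\{0, R-1\},\, r)$ is nonempty. I would select any $a$ in this interval and, echoing the polynomial $w_m$ used in the proof of Theorem \ref{Thm-2-eigenvalue-doubly stochastic-coefficients}, set
\[
p(\lambda) := \lambda^m - a\lambda^{m-1} - a\lambda^{m-2} - \cdots - a\lambda - a,
\]
with $m$ to be chosen. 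This $p$ belongs to $\mathcal{S}_r$ since every nonleading coefficient has modulus $a < r$.

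The central claim is that $p(R) < 0$ for all sufficiently large $m$. If $R = 1$, this is trivial because $p(1) = 1 - ma$ and $a > 0$. If $R \ne 1$, summing the geometric series gives $p(R) = R^m - a\cdot\frac{R^m - 1}{R - 1}$, so $p(R) < 0$ is equivalent to $a > \frac{R^m(R-1)}{R^m - 1}$; a short computation (dividing through by $R^m$ when $R > 1$, and reading off the limit directly when $0 < R < 1$) shows that $\frac{R^m(R-1)}{R^m - 1} \to R - 1$ as $m \to \infty$. Since $a$ was chosen strictly greater than $R - 1$ (and strictly positive when $R \le 1$), the desired inequality $p(R) < 0$ holds for all large enough $m$. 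Combining this with the obvious fact that $p(\lambda) \to +\infty$ as $\lambda \to +\infty$, the intermediate value theorem yields a real root $\lambda^* > R$ of $p$; hence $|\lambda^*| > R$ lies in $\sigma(\mathcal{S}_r)$, which shows $R$ is not an upper bound. As $R < r+1$ was arbitrary, $\sup(\sigma(\mathcal{S}_r)) = r+1$.

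The main (and only) obstacle is the joint selection of the parameter $a$: it must be strictly less than $r$ to make $p$ admissible in $\mathcal{S}_r$, and strictly greater than $R - 1$ to make the asymptotic inequality bite. The strict inequality $R < r+1$ in the hypothesis is exactly what guarantees such an $a$ exists, so the estimate $\frac{R^m(R-1)}{R^m - 1} \to R - 1$ suffices without any more delicate analysis.
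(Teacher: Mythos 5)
Your proposal is correct and follows essentially the same route as the paper: the infimum part is identical, and for the supremum both arguments exhibit test polynomials of the form $\lambda^m - a(\lambda^{m-1}+\cdots+\lambda+1)$ with $a<r$ and invoke the intermediate value theorem to produce a root exceeding any prescribed $R<r+1$. Your execution is a bit cleaner --- you fix $a\in(R-1,r)$ once, work with a scalar ($1\times 1$) polynomial, and show $p(R)<0$ directly for large $m$, whereas the paper takes $a=r-\frac{1}{n}$, uses a $2\times 2$ diagonal matrix polynomial, and runs a contradiction argument with a double limit in $m$ and $n$ --- but the underlying idea is the same.
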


\begin{proof}
Let $P(\lambda)=I\lambda +A_0 \in \mathcal{S}_r$, where $A_0$ is an $n\times n$ 
zero matrix. Then $0$ is an eigenvalue of $P(\lambda)$. Hence 
$0 \in \sigma(\mathcal{S}_r)$. Therefore $\inf (\sigma(\mathcal{S}_r))=0$. We now 
prove that $\sup (\sigma(\mathcal{S}_r))=r+1$. By 
Theorem \ref{Thm-1-eigenvalue-Schur stable-coefficients}, $r+1$ is an 
upper bound for $\sigma(\mathcal{S}_r)$. Let $1 < R < r+1$. Suppose $R$ is an upper bound 
for $\sigma(\mathcal{S}_r)$. For $m \in \mathbb{N}$, define $w_m(\lambda):=\lambda^m -
(r-\frac{1}{n})\lambda^{m-1}-\cdots - (r-\frac{1}{n})\lambda - (r-\frac{1}{n})$, where 
$n\in \mathbb{N}$ is such that $r > \frac{1}{n}$. Then  
$R^m \geq (r-\frac{1}{n})(R^{m-1}+\cdots+R+1)$. Otherwise, 
$R^m<(r-\frac{1}{n})(R^{m-1}+\cdots+R+1)$, from which we see that $w_m(R)= 
R^m-(r-\frac{1}{n})(R^{m-1}+\cdots+R+1)<0$. We also have, $(r+1)^m > r \big
((r+1)^{m-1}+\cdots+(r+1)+1\big) > (r-\frac{1}{n})\big((r+1)^{m-1}+\cdots+(r+1)+1\big)$. 
Therefore $w_m(r+1)>0$. The intermediate value theorem then guarantees that $w_m(\lambda)$ 
has a root in the interval $(R,r+1)$, say, $\lambda_0$. Now consider, 
$P(\lambda)=I \lambda^m +A\lambda^{m-1}+\cdots +A\lambda +A$, where 
$A= \begin{bmatrix}
-(r-\frac{1}{n}) & 0 \\
0 & -(r-\frac{1}{n})
\end{bmatrix} \in M_r$. Then $\text{det}(P(\lambda)) = \Big(\lambda^m -
(r-\frac{1}{n})\lambda^{m-1}-\cdots - (r-\frac{1}{n})\lambda - (r-\frac{1}{n})\Big)^2 
= (w_m(\lambda))^2$. Thus $\lambda_0 \in (R,r+1)$ is an eigenvalue of $P(\lambda)$. 
This implies that $\lambda_0 \in \sigma(\mathcal{S}_r)$, which is a contradiction to 
the assumption that $R$ is an upper bound for $\sigma(\mathcal{S}_r)$. Therefore we 
have, $R^m\geq(r-\frac{1}{n})(R^{m-1}+\cdots+R+1) = (r-\frac{1}{n})\Big(\dfrac{1-R^m}
{1-R}\Big)$. Since $1-R < 0$, we have $(1-R)R^m\leq \Big(r-\dfrac{1}{n} \Big)(1-R^m)$. 
This implies $r+1-R-\dfrac{1}{n}\leq \Big(r -\dfrac{1}{n}\Big)\dfrac{1}{R^m}$ for all  
$m,n \in \mathbb{N}$ with $r > \frac{1}{n}$. Letting $n,m \rightarrow \infty$ we have, 
$r+1-R\leq 0$; that is $r+1 \leq R$, a contradiction to the choice of $R$. 
Thus, $\sup (\sigma(\mathcal{S}_r))=r+1$.
\end{proof}

\medskip

\begin{remark}\label{Rem-Schur stable}
If we remove the commutativity condition on the coefficients of matrix polynomials in 
$\mathcal{S}_r$, then $\sigma(\mathcal{S}_r)$ is unbounded. To see this consider, 
$P_n(\lambda)=I\lambda^2+\begin{bmatrix*}[r]
0 & 0\\
-n & 0
\end{bmatrix*}\lambda + \begin{bmatrix*}[r]
0 & -n \\
0 & 0
\end{bmatrix*}$ for $n \in \mathbb{N}$. The coefficient matrices are non-commuting 
with $0$ as the only eigenvalue. Hence $P_n(\lambda) \in \mathcal{S}_r$ for all 
$n\in \mathbb{N}$ and any $r>0$. The block companion matrix $C_n$ of 
$P_n(\lambda)$ is given by, $C_n =\begin{bmatrix}
0 & 0 & 1 & 0\\
0 & 0 & 0 & 1 \\
0 & n & 0 & 0 \\
0 & 0 & n & 0
\end{bmatrix}$ whose characteristic polynomial is $\lambda(\lambda^3-n^2)$. Thus 
the eigenvalues of $C_n$ and hence of $P_n(\lambda)$ are $0, n^{\frac{2}{3}}, 
\Big(\dfrac{-1-i\sqrt{3}}{2}\Big)n^{\frac{2}{3}}$ and 
$\Big(\dfrac{-1+i\sqrt{3}}{2}\Big)n^{\frac{2}{3}}$.
Since $n^{\frac{2}{3}} \in \sigma(\mathcal{S}_r)$, for all $n \in \mathbb{N}$, 
$\sigma(\mathcal{S}_r)$ is unbounded.
\end{remark}

\medskip
\noindent
{\bf Acknowledgements:} Pallavi .B and Shrinath Hadimani acknowledge the Council of 
Scientific and Industrial Research (CSIR) and the University Grants Commission (UGC), 
Government of India, for financial support through research fellowships.

\bibliographystyle{amsplain}

\end{document}